\documentclass[a4paper,10pt]{amsart}


\usepackage[english]{babel}

\usepackage{amssymb,amsmath,latexsym,amsthm}

\usepackage[
            pdftitle={},
            pdfkeywords={Lattice counting, Quantum variance},
            pdfcreator={Pdflatex},
            pagebackref=true
            ]{hyperref}
\hypersetup{colorlinks=true}

\numberwithin{equation}{section}

\usepackage{xcolor,xspace}
\newcommand{\QVlink}{\hyperlink{targetQV}{\textcolor{blue!75!black}{QV}}\xspace}
\newcommand{\STXlink}{\hyperlink{targetSTX}{\textcolor{blue!75!black}{STX}}\xspace}

\theoremstyle{plain}
\newtheorem{theorem}{Theorem}[section]
\newtheorem{lemma}[theorem]{Lemma}
\newtheorem{proposition}[theorem]{Proposition}
\newtheorem{corollary}[theorem]{Corollary}

\theoremstyle{definition}

\newtheorem{remark}[theorem]{Remark}

\newtheorem*{QV}{Hypothesis QV}
\newtheorem*{STX}{Hypothesis STX}

\def \G {{\Gamma}}
\def \R {{\mathbb R}}
\def \H {{\mathbb H}}
\def \Q {{\mathbb Q}}
\def \GmodH {{\Gamma\backslash\H}}
\def \pslzi {{\hbox{PSL}_2({\mathbb Z}[i])} }
\def \pslz  {{\hbox{PSL}_2( {\mathbb Z})} }
\def \GmodHthree {{\Gamma\backslash\H^3}}

\newcommand{\norm}[1]{\left\lVert #1 \right\rVert}

\title{Local average in the hyperbolic sphere problem}

\author{Giacomo Cherubini}
\address{
   Istituto Nazionale di Alta Matematica ``Francesco Severi'',
   Research Unit Department of Mathematics ``Guido Castelnuovo'',
   Sapienza University of Rome,
   Piazzale Aldo Moro 5, I-00185, Rome,
   Italy
   }
\email{cherubini@altamatematica.it}

\author{Christos Katsivelos}
\address{University of Patras\\
Department of Mathematics\\
26504 Patras\\
Greece}
\email{up1112463@upatras.gr}

\date{\today}
\keywords{automorphic forms, lattice points, hyperbolic space}

\subjclass[2020]{Primary 11F72; Secondary 37C35, 37D40}

\begin{document}

\begin{abstract}
We consider a local average in the hyperbolic lattice point counting problem
for the Picard group $\Gamma$ acting on the three-dimensional hyperbolic space.
Compared to the pointwise case, we improve the bounds on the remainder in the counting,
conditionally on a quantum variance estimate for Maass cusp forms attached to $\Gamma$.
We also use bounds on a spectral exponential sum over the Laplace eigenvalues for $\Gamma$,
which has been studied in the context of the prime geodesic theorem
and for which unconditional bounds are known.
\end{abstract}

\maketitle

\section{Introduction}\label{Introduction}

Let $\H^3$ be the three-dimensional hyperbolic space and let $z\in\H^3$.
The Picard group $\Gamma=\mathrm{PSL}_2(\mathbb{Z}[i])$ acts naturally
on $\H^3$ (see Section \ref{section3}) and therefore any $\gamma\in\Gamma$
moves $z$ to some point $\gamma z$, with the property
that in any given compact set there are only finitely many translates of $z$.
The problem of estimating the number of such translates in a
ball of growing radius is called hyperbolic sphere problem.

More specifically, let $X\geq 1$ and consider the function
\[
N(X,z) := \#\{\gamma\in\Gamma:\; \cosh d(z,\gamma z)\leq X\},
\]
where $d(z,w)$ denotes the hyperbolic distance.
By definition, $N(X,z)$ counts the number of lattice points $\gamma z$
inside a ball of radius $\cosh^{-1}X$. The presence of the hyperbolic cosine
is a common normalization in hyperbolic geometry, where distances grow exponentially,
and is chosen so that the main asymptotic for $N(X,z)$ becomes polynomial in $X$.

As one may expect, $N(X,z)$ is asymptotic to the volume of a hyperbolic ball of radius $\cosh^{-1}X$,
divided by the volume of a fundamental domain for $\Gamma$.
This can be proved with an explicit error term
\cite{elstrodt,laaksonen,laxphillips} in the form
\begin{equation}\label{1802:eq001}
N(X,z) = c_\Gamma X^2 + O(X^{3/2}),
\end{equation}
where (cf. ~\cite[p.312]{elstrodt}  and ~\cite[Eq.~(1.24)]{phirud})
\begin{equation*}
c_\Gamma=\frac{\pi}{2\mathrm{vol}(\GmodHthree)}=\frac{3\pi}{2\mathcal{C}}
\end{equation*}
and $\mathcal{C}\approx 0.915965$ is Catalan's constant.
Notice that the main term in \eqref{1802:eq001} is independent of $z$. The implied constant
in the error depends on $z$, but is uniformly bounded for $z$ in a compact set.

The exponent $3/2$ in \eqref{1802:eq001} has never been improved,
for no choice of the point~$z$, nor for groups other than the Picard group.
However, several types of averages have been studied \cite{hillparn,laaksonen,  phirud},
proving smaller exponents on average and hinting that the true
order of magnitude of the error in \eqref{1802:eq001} should be smaller than $O\left(X^{3/2}\right)$,
possibly of the order $O(X^{1+\epsilon})$ for every $\epsilon>0$.

The aim of this paper is to consider a local average of the problem
and measure how much we can reduce the exponent in the remainder
under the assumption of two hypotheses
that have been studied before in the literature:
\begin{itemize}
   \item a quantum variance bound for cusp forms attached to the group $\Gamma$;
   \item a bound on a spectral exponential sum over hyperbolic Laplace eigenvalues.
\end{itemize}
Moreover, we can compare our results with the case of the modular group $\pslz$
in two dimensions, where the same problem was studied by Petridis and
Risager \cite{petridisrisager} (and by Bir\'o \cite{biro} for any cofinite Fuchsian group).
In two dimensions, the first point
is in fact a theorem due to Luo and Sarnak \cite{luo sarnak},
and there has been a lot of progress on the corresponding spectral
exponential sum \cite{balkanova3, balkanova4,iwaniec 2, luo sarnak}.
In three dimensions, the analogous exponential sum has
been studied extensively in \cite{balkanova, BF, koyama},
but we are not aware of any non-trivial estimate on the quantum variance.

We require some more notation and terminology to state the hypotheses we need in rigorous terms,
and we postpone this to after Theorem \ref{ourmaintheorem},
see Hypothesis \QVlink 
in \S\ref{S1:QV} and Hypothesis \STXlink in \S\ref{S1:STX} respectively.
In addition, for a smooth compactly supported function $f$ on the Picard manifold
$\GmodHthree$, we define its average value by the integral
\begin{equation}\label{def:fbar}
\bar{f} = \frac{1}{\mathrm{vol}(\GmodHthree)} \int_{\GmodHthree} f(z)\,d\mu(z).
\end{equation}
Our main result can be stated as follows.

\begin{theorem}\label{ourmaintheorem}
Let $\G =\mathrm{PSL}_2(\mathbb{Z}[i])$ and
let~$f$ be a smooth compactly 
supported function on $\GmodH^3$,
with its average value $\bar{f}$
defined as in \eqref{def:fbar}.
Furthermore, assume:
\begin{itemize}
   \item Hypothesis \QVlink with the exponent $q\in[1,3]$;
   \item Hypothesis \STXlink with the exponents $(7/4+\theta,1/4)$, for some $\theta\in[0,1/4]$.
\end{itemize}
Then, for every $\epsilon>0$, we have
\begin{equation}\label{1802:eq002}
\int_{\GmodHthree} N(X,z)f(z)\,d\mu(z)=\frac{\pi}{2}\bar{f}X^2+O_{f,\epsilon,\theta,q}\left(X^{\frac{6-4\theta}{5-4\theta}+\epsilon}+X^{\frac{2q}{q+1}+\epsilon}\right).
\end{equation}
\end{theorem}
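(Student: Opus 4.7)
The strategy is the standard one for hyperbolic lattice-point counting on $\GmodHthree$: expand the counting function spectrally via the pre-trace formula, integrate termwise against the test function $f$, and bound the resulting spectral sum by combining an oscillatory estimate (Hypothesis \STXlink) on the high frequencies with a quantum variance estimate (Hypothesis \QVlink) on the low frequencies. As a preparatory step I would introduce an auxiliary smoothing scale $Y\in(0,1)$ and replace the characteristic function of $[1,X]$ by a smooth majorant and minorant $k^{\pm}_Y$ supported in $[1,X(1+Y)]$ and $[1,X(1-Y)]$ respectively. After integration against $f$, the difference between the smoothed counts $N^{\pm}_Y(X,z,z)$ and $N(X,z,z)$ contributes an error of size $O_f(X^2 Y)$, produced by the main-term contribution on a thin annulus of lattice points.

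Next I would apply Selberg's pre-trace formula to the $\Gamma$-automorphisation of $k^{\pm}_Y$:
\[
N^{\pm}_Y(X,z,z) = c_\Gamma X^2 + \sum_{t_j>0} h^{\pm}_Y(t_j)\,|u_j(z)|^2 + (\text{Eisenstein part}) + O(X),
\]
where $(u_j)$ is an orthonormal basis of Maass cusp forms with spectral parameters $t_j$ and $h^{\pm}_Y$ is the three-dimensional Harish-Chandra (Selberg) transform of $k^{\pm}_Y$. A stationary phase analysis, analogous to the two-dimensional computation in \cite{petridisrisager}, shows that $h^{\pm}_Y(t_j)$ behaves like $c_{\pm} X^{1+2it_j}\,t_j^{-1}$ in the range $1\leq t_j\leq Y^{-1}$ and decays faster than any polynomial for $t_j\gg Y^{-1+\epsilon}$. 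After integrating against $f$, the constant-eigenfunction term produces $(\pi/2)\bar{f}\,X^2$, the Eisenstein contribution is estimated unconditionally via subconvex bounds for the Dedekind zeta function of $\Q(i)$ and gives a power-saving lower-order error, and the cuspidal contribution reduces to the single spectral sum
\[
S(X,Y) = \sum_{t_j>0} h^{\pm}_Y(t_j)\,\langle f,|u_j|^2\rangle.
\]

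Then I would bound $S(X,Y)$ by splitting at a truncation level $T\in[1,Y^{-1}]$. For the low range $t_j\leq T$, Cauchy--Schwarz applied to $h^{\pm}_Y(t_j)$ and $\langle f,|u_j|^2\rangle$, combined with Weyl's law on $\GmodHthree$ for the first factor and Hypothesis \QVlink with exponent $q$ for the second, yields a bound of the shape $X\,T^{(q-1)/2+\epsilon}$ (up to logarithmic factors). For the high range $T<t_j\ll Y^{-1+\epsilon}$ I would dyadically decompose and use summation by parts to transfer the oscillating factor $X^{2it_j}$ from the asymptotics of $h^{\pm}_Y(t_j)$ onto partial sums $\sum_{t_j\leq U} X^{2it_j}$, which are controlled by Hypothesis \STXlink with exponents $(7/4+\theta,1/4)$, combined with a crude pointwise estimate $|\langle f,|u_j|^2\rangle|\ll_f t_j^{\epsilon}$. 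Balancing the three resulting contributions --- the smoothing error $X^2Y$, the low-frequency Cauchy--Schwarz bound, and the high-frequency STX bound --- in the two parameters $Y$ and $T$ produces exactly the two exponents $(6-4\theta)/(5-4\theta)$ and $2q/(q+1)$ appearing in \eqref{1802:eq002}.

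The hard part is the high-range analysis. One must retain the oscillation $X^{2it_j}$ all the way through the asymptotic expansion of the three-dimensional Harish-Chandra transform of $k^{\pm}_Y$, and verify that the secondary and boundary terms in that expansion are genuinely dominated by the main oscillating contribution, so that the STX bound can be applied as a black box via Abel summation. This requires a careful three-dimensional analogue of the Selberg-transform analysis used in \cite{petridisrisager} for $\pslz$, and an honest accounting of the interplay between the smoothing scale $Y$ and the truncation level $T$ so that both \QVlink and \STXlink are exploited precisely in the ranges where they give the strongest savings.
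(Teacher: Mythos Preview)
Your overall architecture---smooth, expand spectrally, integrate against $f$, then split the cuspidal sum into a piece controlled by \QVlink and a piece controlled by \STXlink---matches the paper. But the way you propose to execute the split has a real gap.

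\textbf{The gap: you cannot apply \STXlink to a sum that still carries the weights $\langle f,|u_j|^2\rangle$.} In your high range $T<t_j\ll Y^{-1}$ you want to Abel-sum the smooth factor coming from $h^\pm_Y$ and reduce to partial sums $\sum_{t_j\le U}X^{it_j}$. But the cuspidal sum you wrote down is $\sum_j h^\pm_Y(t_j)\langle f,|u_j|^2\rangle$, so after peeling off the smooth amplitude you are left with partial sums $\sum_{t_j\le U}X^{it_j}\langle f,|u_j|^2\rangle$, not the clean $S(U,X)$. The weights $\langle f,|u_j|^2\rangle$ are bounded (trivially $\le \|f\|_\infty$ since $\|u_j\|_2=1$, so your ``crude pointwise estimate'' is fine as a bound) but they are not smooth or of bounded variation in $t_j$, so Abel summation does not strip them off. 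And if instead you pull them out in sup-norm before summing, you destroy the cancellation that \STXlink provides: you are back to $\sum_{T<t_j<Y^{-1}}|h^\pm_Y(t_j)|$, which by Weyl's law is of size roughly $e^R Y^{-1}$, far too large.

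The fix---and this is exactly what the paper does---is to subtract $\bar f$ \emph{before} you split, writing $\langle f,|u_j|^2\rangle=\bar f+\bigl(\langle f,|u_j|^2\rangle-\bar f\bigr)$ uniformly over the whole spectrum. The $\bar f$-piece produces the clean sum $\bar f\sum_{r_j>0}h^\pm(r_j)$, to which \STXlink applies directly via partial summation (the paper's Lemma~\ref{gcl4}). The deviation piece is bounded over the \emph{entire} spectrum by Cauchy--Schwarz plus \QVlink (the paper's Lemma~\ref{gcl3}), with the decay of $h^\pm$ past $r\sim\eta^{-1}$ making the dyadic sum converge. Once you do this there is only one free parameter (the smoothing scale $\eta\sim Y$), not two; your extra truncation $T$ becomes redundant, and the balance $\eta=e^{-2R/(q+1)}$ gives the stated exponents.

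Two minor corrections you would discover once you carry out the three-dimensional computation: for the Picard manifold the Selberg transform of the ball indicator satisfies $h_R(r)\asymp e^R r^{-2}e^{\pm irR}$ for $r\ge 1$ (decay $r^{-2}$, not $r^{-1}$; oscillation $X^{ir}$, not $X^{2ir}$, since here $X\asymp e^R$). Also, the Eisenstein contribution does not need subconvexity for $\zeta_{\Q(i)}$; the Maass--Selberg relations plus a crude bound $\phi'/\phi(1+ir)\ll\log^\kappa(4+|r|)$ already give $O_f(Re^R)$, which is negligible.
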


At this stage, even without knowing the exact formulation of Hypothesis \QVlink and Hypothesis \STXlink,
the reader can verify that
if $q<3$ (any $\theta\in[0,1/4]$), the remainder in \eqref{1802:eq002} is smaller than $O(X^{3/2})$,
improving \eqref{1802:eq001} in the local average.
See Remark~\ref{intro:rmk:variousbounds} for more comments on the exponents one obtains in the remainder,
depending on the values of $q$ and $\theta$.

\subsection{Quantum variance}\label{S1:QV}

Let us give the precise formulation of our assumptions.
The hyperbolic Laplace operator acts on the space
of square-summable functions on $\GmodHthree$
and admits a continuous spectrum as well as a discrete spectrum of eigenvalues
\[
0=\lambda_0<\lambda_1\leq \lambda_2\leq \cdots \leq \lambda_j\to\infty.
\]
Eigenvalues $\lambda_j\in [0,1]$ are called `small eigenvalues', but it is  known that for the
Picard manifold the only such eigenvalue is $\lambda_0=0$
and that in fact $\lambda_1>2\pi^2/3$, see \cite[Ch.~7, Proposition~6.2]{elstrodt}.
We can therefore write $\lambda_j=1+r_j^2$ with $r_0=i$ and $r_j\in [1,\infty)$ if $j\geq 1$.
For each $\lambda_j$ we denote by $u_j$ the associated eigenfunction and
if $\lambda_j$ occurs with multiplicity, it will be listed a corresponding number of times,
paired with distinct eigenfunctions.
One can construct the spectral measures
\[
d\mu_j(z) := |u_j(z)|^2d\mu(z).
\]
When $j\to\infty$,
these measures are expected to converge to the normalized uniform measure
on $\GmodHthree$, a prediction referred to as the
Quantum Unique Ergodicity Conjecture
and known to be true for the modular surface in two dimensions \cite{linderstrauss,soundararajan}.
In Theorem~\ref{ourmaintheorem} we assume an explicit quantitative version
of this result in square mean.

\hypertarget{targetQV}
{\begin{QV}
Let $\Gamma$ be the Picard group.
We say that Hypothesis QV holds with the exponent $q$
if for all $\epsilon>0$,
$T\gg 1$ and all smooth compactly supported functions $f$ on $\GmodHthree$, we have
\begin{equation}\label{2202:eq001}
\sum_{0<r_j\leq T}
\;\biggl|\int_{\GmodHthree} f(z) d\mu_j(z) - \bar{f}\biggr|^{2}
\ll_{f,\epsilon,q} T^{q+\epsilon},
\end{equation}
where $\bar{f}$ denotes the average value of $f$ as defined in \eqref{def:fbar}.
\end{QV}}

It is easy to check that the trivial bound in \eqref{2202:eq001} is $O_{f}(T^3)$ by the Weyl law.
As we mentioned earlier
\eqref{2202:eq001}
was proved for the modular group
by Luo and Sarnak \cite[Theorem 1.2]{luo sarnak}, with $q=1$
and with an explicit dependence on the test function~$f$.

Variance sums such as \eqref{2202:eq001} were introduced by Zelditch~\cite{zelditch1},
who proved a logarithmic saving over the trivial bound given by the Weyl law.
The result of Luo and Sarnak has been extended by Zhao \cite{Zhao}
and later by Sarnak and Zhao \cite{Sarnak Zhao} to non-holomorphic cusp forms.
We also mention that for any negatively curved, compact Riemannian manifold $\mathcal{M}$
of dimension at least~$2$,
it is conjectured (see e.g \cite[Equation (3)]{nelson})
that we should in fact have an asymptotic result of the form
\[
\sum_{\lambda_j\leq T^2} \left|\int_{\mathcal{M}}f(z)|u_j(z)|^2\,d\mu(z)-\frac{1}{\mathrm{vol}(\mathcal{M})}\int_{\mathcal{M}}f(z)\,d\mu(z)\right|^2=c_{\mathcal{M}}T+o(T),
\]
where $\{u_j\}$ is an orthonormal basis for the $\lambda_j$-eigenspace
of the Laplacian on $L^2(\mathcal{M})$ with respect to the measure $d\mu(z)$ and $f$ a test function.

\begin{remark}
In light of the above predictions and of the known results,
it has been conjectured 
\cite{feingold, nelson, zelditch2} that
Hypothesis \QVlink holds with $q=1$.
If we assume this in Theorem \ref{ourmaintheorem},
then the last term in \eqref{1802:eq002} is bounded by $O(X^{1+\epsilon})$
and is always dominated by the first term in the remainder, which is sensitive
to the strength of Hypothesis \STXlink.
\end{remark}

\subsection{A spectral exponential sum}\label{S1:STX}

Consider again the spectral parameters $r_j$ and
let $T,X>1$. The proof of Theorem \ref{ourmaintheorem}
involves the exponential sum
\[
S(T,X) = \sum_{0<r_j\leq T} X^{ir_j} .
\]
Bounding in absolute value each summand
and using the Weyl law (see \eqref{gqweyllaw}),
one gets $S(T,X)=O(T^3)$, uniformly in $X$.
However, it is reasonable to expect some cancellation in the sum.

\hypertarget{targetSTX}
{\begin{STX}
Let $\Gamma$ be the Picard group. We say that Hypothesis STX
holds with the exponents $(\alpha,\beta)$ if, for all $\epsilon>0$
and all $T,X\gg 1$, we have
\[
S(T,X) \ll_{\epsilon,\alpha,\beta} T^{\alpha+\epsilon} X^{\beta+\epsilon} + T^2.
\]
\end{STX}}

By the above observation, Hypothesis STX holds with the exponents $(3,0)$,
even without the additional $\epsilon$-powers nor the secondary term $T^2$.
Such a term is included to account for the number of spectral parameters
$r_j$ in a unit interval at height~$T$. In very short windows
the eigenvalues could cluster together and so it may be difficult
to detect cancellation at that level.

The sum $S(T,X)$ appears naturally in the spectral analysis of the prime geodesic
theorem and has been studied extensively in that context,
both in two and three dimensions.

For the Picard group, it was proved in \cite{balkanova}
that Hypothesis \STXlink holds with the exponents $(2,1/4)$.
Balkanova and Frolenkov \cite{BF} showed that the pair
$(7/4+\theta,1/4)$ is allowed, where $\theta$ is any subconvexity exponent
for quadratic Dirichlet $L$-functions over the Gaussian integers.
By standard convexity estimates one can take $\theta=1/4$, recovering
the exponents $(2,1/4)$. Nelson \cite{nelson2} announced that $\theta=1/6$ holds,
which would match Weyl's subconvexity exponent for Dirichlet
$L$-functions over the integers \cite{CI,PY1,PY2}.
Qi's work \cite{Qi} (see \cite[Theorem 2]{Qi} and combine it with the comment after
\cite[Corollary 3.4]{balkanova}) gives Hypothesis \STXlink with the exponents $(15/8,1/4)$.

\begin{remark}\label{intro:rmk:variousbounds}
In the remainder in Theorem \ref{ourmaintheorem} the last term dominates as long as $q\geq (3-2\theta)/(2-2\theta)$.
When $q$ is below this value, Hypothesis \STXlink comes into play.
If we use the pair $(2,1/4)$, corresponding to the convexity exponent $\theta=1/4$,
and we assume Hypothesis \QVlink with any $q\leq 5/3$,
then the remainder in \eqref{1802:eq002} is bounded by $O(X^{5/4+\epsilon})$.
Instead, if we use the the strongest estimate on $S(T,X)$, corresponding to $\theta=0$,
and we further assume Hypothesis \QVlink with $q=1$,
then we see that the remainder in \eqref{1802:eq002} is bounded by $O(X^{6/5+\epsilon})$.
Notice that actually any $q\leq 3/2$ suffices in this case.
\end{remark}

\begin{remark}
It has been conjectured \cite[Conjecture 5.1]{kaneko 2}
that Hypothesis \STXlink holds with the exponents $(2,0)$.
If we assume this and Hypothesis \QVlink with $q=1$, then our proof will give
the optimal error term in the local average,
in analogy with the two-dimensional case,
see~\cite[Remark 6.4]{petridisrisager}.
\end{remark}

\begin{remark}
Lower bounds on the remainder in the $n$-dimensional hyperbolic lattice point counting problem
are known by the work of Phillips and Rudnick:
they proved that for any cocompact or congruence group the remainder satisfies
\begin{equation}\label{2507:eq001}
\Omega(X^{\frac{n-1}{2}}(\log\log X)^{\frac{n-1}{2n}-\epsilon}),
\end{equation}
for any $\epsilon>0$, see \cite[Eq.~(4.24)]{phirud}.
When $n=2$, a lower bound of the same quality has been proved by Petridis and Risager in the local average
for the modular group, see \cite[Theorem~1.2]{petridisrisager}.
When $n=3$, we expect that adapting the methods of \cite{petridisrisager,phirud}
one should be able to match again the pointwise lower bound
\eqref{2507:eq001} also in the local average for the Picard group.
\end{remark}

\subsection{Outline of the proof}

Let us give a sketch of the proof of Theorem \ref{ourmaintheorem},
without being too rigorous but rather trying to point out where Hypothesis \QVlink and Hypothesis \STXlink are used.
The spectral expansion of the counting function $N(X,z)$ gives, very roughly,
\[
N(X,z) = c_{\G}X^2 + \sum_{0<r_j\leq T} \frac{X^{1+ir_j}}{r_j^2}|u_j(z)|^2 + O\left(\frac{X^{2}}{T}\right),
\]
where $T>1$ is a parameter
(we are making several simplifications here: the sum is in fact infinite with a smoothing; we omit the tail since it will contribute as much as the part with $r_j$ up to $T$; the coefficients behave like $r_j^{-2}$ only asymptotically; and one should take the real part of the sum; the error comes from the smoothing of the main term).

We wish to remove the eigenfunctions $|u_j(z)|^2$ and we do this by
means of a local average: integrating against a smooth compactly supported
function $f$ as in Theorem \ref{ourmaintheorem} gives
\[
\int_{\GmodHthree} N(X,z)f(z)d\mu(z) = \frac{\pi}{2}\bar{f} X^2 + \sum_{0<r_j\leq T} \frac{X^{1+ir_j}}{r_j^2}\int_{\GmodHthree} f(z)d\mu_j(z) + O\left(\frac{X^{2}}{T}\right).
\]
We add and subtract the quantity
\[
\bar{f}\sum_{0<r_j\leq T}\frac{X^{1+ir_j}}{r_j^2}
\]
and apply Cauchy--Schwarz, obtaining
\[
\begin{split}
    &\int_{\GmodHthree}N(X,z)f(z)\,d\mu(z)-\frac{\pi}{2}\bar{f}X^2-\bar{f}\sum_{0<r_j\leq T}\frac{X^{1+ir_j}}{r_j^2}
    \ll \frac{X^2}{T}
    \\
    &+
    X\Biggl(\sum_{0<r_j\leq T}\frac{1}{r_j^3}\Biggr)^{\!\!1/2} \!
    \Biggl(\sum_{0<r_j\leq T}\frac{1}{r_j}\biggl|\int_{\GmodHthree}f(z)|u_j(z)|^2\,d\mu(z)-\int_{\GmodHthree}f(z)\,d\mu(z)\biggr|^2\Biggr)^{\!\!1/2}.
\end{split}
\]
The sum of $r_j^{-3}$ gives $O(\log T)$ and on the second sum we apply
the quantum variance bound from Hypothesis \QVlink, which leads to
\[
\int_{\GmodHthree}N(X,z)f(z)\,d\mu(z)-\frac{\pi}{2} \bar{f}X^2-X\bar{f}\sum_{0<r_j\leq T}\frac{X^{ir_j}}{r_j^2}
\ll \frac{X^2}{T}+XT^{\frac{q-1}{2}+\epsilon}.
\]
Next on the oscillating sum with $X^{ir_j}$
we use Hypothesis \STXlink with the exponents $(3,0)$ and $(7/4+\theta,1/4)$.
By interpolation (ignore the secondary term $T^2$), they give the pair
$(2,(1-4\theta)/(5-4\theta))$, which in turn gives
\[
\int_{\GmodHthree}N(X,z)f(z)\,d\mu(z)-\frac{\pi}{2}\bar{f}X^2
\ll
\frac{X^2}{T} + X^{\frac{6-4\theta}{5-4\theta}+\epsilon}T^{\epsilon} + XT^{\frac{q-1}{2}+\epsilon}.
\]
Finally, we pick $T=X^{\frac{2}{q+1}}$ in order to balance the first and the last term
on the right, which yields
\[
\int_{\GmodHthree}N(X,z)f(z)\,d\mu(z) = \frac{\pi}{2} \bar{f} X^2
+
O\bigl( X^{\frac{6-4\theta}{5-4\theta}+\epsilon}+X^{\frac{2q}{q+1}+\epsilon}\bigr),
\]
as stated in Theorem \ref{ourmaintheorem}.

\subsection{Outline of the paper}

The paper is organized as follows: in Section \ref{section3}
we explain the geometric definition of the problem
and compute the Selberg transform of the indicator function of a ball.
In Section \ref{S4} we perform the spectral expansion of
the (smooth) automorphic kernel associated to $N(X,z)$
and apply Hypothesis \QVlink to remove the eigenfunctions.
In Section \ref{S5} we apply Hypothesis \STXlink
and remove the smoothing, concluding the proof of Theorem \ref{ourmaintheorem}.

\subsection*{Acknowledgements}
We thank Dimitrios Chatzakos for helpful discussions.
We thank the reviewer for their comments on the paper.
G.C.~is member of the INdAM group GNSAGA.
C.~K.~was supported by the Hellenic Foundation for Research and Innovation (H.F.R.I.) under the “3rd Call for H.F.R.I. Research Projects to support Faculty Members \& Researchers” (Project Number: 25622).

\section{Initial Settings} \label{section3}

We introduce here terminology and notation that will be used throughout the paper.
Most of the material in this section is standard and is taken from \cite{elstrodt}.
The group $\Gamma$ is fixed once and for all to be the Picard group $\pslzi$.

\subsection{Geometric definitions}
The hyperbolic three-dimensional space $\H^3$ is described in the upper half-space model
as the set of points $z=(x_1,x_2,y)$, with $x_1,x_2\in\mathbb{R}$ and $y>0$.
Equivalently, a point $z\in\H^3$ can be written as a quaternion $z=x+jy$, where $x\in\mathbb{C}$ and $j^2=-1$.
If we do so, then the action of the Picard group
on $\H^3$ is given by generalized linear fractional
transformations: an element
$\gamma=\left(\begin{smallmatrix}
    a & b\\
    c & d
\end{smallmatrix}\right)\in\Gamma$
acts on a point $z\in\H^3$ by
\[\gamma z=(az+b)(cz+d)^{-1},\]
where inverse and multiplication are taken in the skew field of quaternions.
The metric on $\H^3$ is given by the line element $ds^2=y^{-2}(dx_1^2+dx_2^2+dy^2)$.
This gives rise to a hyperbolic volume measure $\mu$
whose corresponding volume element is $d\mu=y^{-3}dx_1dx_2dy$.
An easy calculation shows that 
the volume of a ball of radius~$R$ with respect to~$\mu$ equals
\begin{equation*}
\mu(B_R)=\pi\left(\sinh(2R)-2R\right).
\end{equation*}
Notice that, when $R\to 0$, the
above volume is asymptotic to $4\pi R^3/3$, that is, the volume of the standard ball in $\mathbb{R}^3$.

Let us denote by $d$ the distance induced by the above metric.
It is sometimes handy to express $d(z,w)$ in terms of the function
\begin{equation*}
    \delta(z,w)=\frac{|x_z-x_w|^2+y_z^2+y_w^2}{2y_zy_w},
\end{equation*}
where $|x_z-x_w|$ is the usual absolute value in $\mathbb{C}$. By \cite[Ch.1 Proposition 1.6]{elstrodt}, we have
\begin{equation}\label{invariantmetricrelation}
    \cosh d(z,w)=\delta(z,w).
\end{equation}
Therefore we can pass without much effort from $d$ to $\delta$ and vice versa.

\subsection{Selberg transform of the indicator function of a ball}

Let $R\geq 0$. We consider the indicator function of a ball in $\H^3$, namely
\begin{equation}\label{kernelgiacomo}
k_R(\delta(z, w)):=
\mathbf{1}_{[1,\cosh R]}(\delta(z,w))
=
\begin{cases}
1 & \text{if $\delta(z,w)\leq \cosh R$}, \\
0 & \text{otherwise}.
\end{cases}
\end{equation}
By \eqref{invariantmetricrelation},
the above condition on $\delta$ corresponds to $d(z,w)\leq R$.
Functions $k$ that depend only on the hyperbolic distance between points in $\H^3$,
such as $k_R$, are called point-pair invariants. For these, we define
the Selberg transform by the formula \cite[Ch.3, Eq. (5.3)]{elstrodt}
\begin{equation}\label{giacomospectralfunction}
    h(r)=\frac{4\pi}{r}\int_{0}^{\infty}k(\cosh u)\sin (ru)\sinh u\,du.
\end{equation}
When $r=0$, the above is understood as the limit $r\to 0$.
Note that $h$ is even and that if $k$ is compactly supported and bounded (as is $k_R$),
then $h$ is well defined for all complex values of $r$.
The Selberg transform of $k_R$ is
\begin{equation}\label{ourspectral}
    h_R(r)=\frac{2\pi\sinh \left(R(1+ir)\right)}{ir(1+ir)}-\frac{2\pi\sinh \left(R(1-ir)\right)}{ir(1-ir)},
\end{equation}
for all $r\in\mathbb{C}$ with the exception of $r=0,\pm i$.
At these points we have
\begin{equation}\label{specialvalues}
\begin{gathered}
h_R(0) = 4\pi(R\cosh R-\sinh R),\\
h_R(\pm i) = \pi(\sinh (2R)-2R).
\end{gathered}
\end{equation}

Next, we construct approximations to $k_R$ from above and below, in such a way 
that the corresponding Selberg transforms will have better decay properties than~$h_R$.
Let $k,k'$ be two point-pair invariant functions. The hyperbolic convolution of $k$ and $k'$ is defined as 
\begin{equation*}
   ( k\ast k')(\delta(z,w))=\int_{\H^3}k(\delta(z,u))k'(\delta(u,w))\,d\mu(u).
\end{equation*}

As showed in \cite[4. p.323]{cham2} in the two-dimensional case
(we omit the proof in $\H^3$),
the Selberg transform turns convolution into pointwise product.
Using this observation, for $0<\eta<1$ and $R\gg 1$ we define 
\begin{equation*}\label{kernelpm}
    k_{\pm}(\delta(z,w)):=\frac{\left(k_{R\pm\eta}\ast k_{\eta}\right)(\delta(z,w))}{\mu(B_\eta)},
\end{equation*}
so that the corresponding Selberg transform is 
\begin{equation}\label{gc spectral}
    h_{\pm}(r)=\frac{h_{R\pm\eta}(r)h_{\eta}(r)}{\mu(B_\eta)}.
\end{equation}
Note that we have 
\begin{equation}\label{gckernelpm}
k_{\pm}(\delta(z, w))=
\begin{cases}
1 & \text{if } d(z,w)\leq R\pm\eta-\eta,\\
0 & \text{if } d(z,w)\geq R\pm\eta+\eta
\end{cases}
\end{equation}
and $0\leq k_{\pm}\leq 1$ in the remaining range.
Therefore we obtain the inequality 
\begin{equation}\label{giacomoinequality}
    k_{-}\leq k_R\leq k_{+}.
\end{equation}
Regarding $h_{\pm}$ we record the following.

\begin{lemma}\label{gcl1}
    Let $0<\eta<1,\, R\gg1$ and let $h_\pm$ be as in \eqref{gc spectral}.
    For any $L>0$, the function $h_\pm$ is even and holomorphic in the strip $|\Im(r)|<L$.
    In such a strip, we have the estimate $h_{\pm}(r)\ll_{L,R,\eta}(1+|r|)^{-4}$.
    When $r\in\mathbb{R},$ we have
    \begin{equation}\label{hpmestimate}
        h_{\pm}(r)\ll\frac{Re^R}{(1+|r|)^2(1+\eta|r|)^2},
    \end{equation}
    where the implied constant is absolute.
    When $r\geq 1$, we can write
    \begin{equation}\label{0202:eq001}
    h_{\pm}(r) = A(R,r,\eta)e^{ir(R\pm\eta)} + B(R,r,\eta)e^{-ir(R\pm \eta )},
    \end{equation}
    where
    \begin{equation}\label{0901:eq010}
    \begin{gathered}
    A(R,r,\eta),B(R,r,\eta)\ll e^{R} r^{-2}\min\{1,(\eta r)^{-2}\},\\
    \partial_r A(R,r,\eta),\partial_r B(R,r,\eta) \ll e^{R} r^{-3} \min\{1,(\eta r)^{-1}\}.
    \end{gathered}
    \end{equation}
    The implied constants in \eqref{0901:eq010} are uniformly bounded in $R,r\geq 1$ and $\eta\in(0,1)$.
\end{lemma}

\begin{proof}
The holomorphicity of $h_\rho(r)$ in any strip $|\Im r|<L$ follows directly from
\eqref{ourspectral} and \eqref{specialvalues}.
Let $r$ be in such a strip, 
with $|r|\gg 1$.
Applying the inequality
$|\sinh (\rho(1+ir))|\leq e^{(L+1)\rho}$
in \eqref{ourspectral}
and recalling the definition of $h_\pm$ in \eqref{gc spectral},
we deduce the desired bound $h_{\pm}(r)\ll_{L,R,\eta}(1+|r|)^{-4}$.
Next, let us discuss \eqref{hpmestimate}.
For any $\rho>0$ and $r\in\R$ we have that 
\begin{equation}\label{0901:eq001}
|h_\rho(r)|\leq h_\rho(0) \leq \mu(B_\rho).
\end{equation}
Indeed, using \eqref{kernelgiacomo}--\eqref{giacomospectralfunction} and by the inequality $|\sin(ru)|\leq |ru|$, valid for all $u,r\in\R$, we have that 
\begin{align*}
|h_\rho(r)|
&=
\frac{4\pi}{|r|} \int_0^\rho |\sin(ru)| \sinh(u) du
\\
&\leq 4\pi \int_0^\rho u \sinh(u) du
\;=h_\rho(0)\leq 4\pi \int_0^\rho \sinh^2(u) du = h_\rho(\pm i) = \mu(B_\rho).
\end{align*}
We claim that the following estimate holds for all $r\in\mathbb{R}$, $R\geq 1$ and $\eta\in(0,1)$:
\begin{equation}\label{july:eq001}
h_\pm(r) \ll
Re^R\min\left\{1,\frac{1}{r^2},\frac{1}{r^4\eta^2}\right\},
\end{equation}
where $r=0$ is understood as the limit $r\to 0$ and from which the stated bound \eqref{hpmestimate}
easily follows.
To prove \eqref{july:eq001}, we apply \eqref{0901:eq001} to $h_\eta(r)$ obtaining $|h_\eta(r)|\leq \mu(B_\eta)$ and also to $h_{R\pm\eta}(r)$ obtaining the estimate $|h_{R\pm\eta}(r)|\leq h_{R\pm\eta}(0)\ll R e^R$, uniformly in $\eta$. Combining the above in the definition \eqref{gc spectral} of $h_\pm(r)$ yields
\[
h_\pm(r) \ll Re^{R},
\]
which gives the first term in the minimum.
As for the second and third bound
in \eqref{july:eq001},
it suffices to prove them when $|r|\geq 1$, for otherwise the bound we just proved implies the result. 
When $|r|\geq 1$, we apply again \eqref{0901:eq001} to $h_\eta(r)$, while for $h_{R\pm\eta}(r)$ we can rewrite \eqref{ourspectral} as 
\begin{equation}\label{0901:eq002}
h_R(r) = \frac{4\pi}{r(1+r^2)}\bigl(\cosh(R)\sin(Rr)    -r\sinh(R) \cos(Rr) \bigr).
\end{equation}
Bounding $\cosh(R)\ll e^R$, $\sin(R r)\ll R|r|$, $\sinh(R)\ll e^R$ and $\cos(Rr)\ll 1$, we obtain
\[
|h_\pm(r)| \leq |h_{R\pm\eta}(r)| \ll \frac{Re^R}{r^2},
\]
which proves the second bound in \eqref{july:eq001}.
Finally, to prove the last bound in \eqref{july:eq001},
we apply \eqref{0901:eq002} to $h_\eta(r)$ and bound
$\sinh(\eta)\ll\eta$, $\sin(\eta r)\ll \eta |r|$, $\cos(\eta r)\ll 1$ and lastly $\cos(\eta)\ll 1$,
getting
\[
h_\eta(r) \ll \frac{\eta}{r^2},
\]
which yields, recalling $\mu(B_\eta)\asymp \eta^3$,
\[
h_\pm(r) \ll \frac{Re^R}{r^2} \times \frac{\eta}{r^2} \times \frac{1}{\eta^3} \ll \frac{Re^R}{r^4\eta^2}
\]
as claimed.
Finally, \eqref{0202:eq001} and \eqref{0901:eq010} follow from \eqref{ourspectral}
and similar estimates.
\end{proof}

\begin{remark}
    As the proof of Lemma \ref{gcl1} shows, the bound in \eqref{hpmestimate} is not sharp,
    because we apply
    the uniform bound $Re^{R}$ 
    all the time.
    This choice comes at the cost of a polynomial loss of magnitude $R$, which does not affect our result,
    but helps for the brevity of our proofs.
\end{remark}

\subsection{Approximation in the circle problem.}

The counting function $N(X,z)$ defined in the introduction
gives the number of lattice points in a ball of radius $R=\cosh^{-1}X$.
As described above in this section, many geometric quantities
are naturally expressed in terms of the variable $R$ rather than
the variable $X$. Because of this, sometimes we find convenient
to pass from one normalization to the other.
By a small abuse of notation, we write $N(R,z)$ in place of $N(X,z)$,
so that we can state our results in terms of $R$. Recalling that $k_R$
is the indicator function of a ball of radius $R$ (see \eqref{kernelgiacomo}),
we can thus write
\begin{equation*}
    N(R,z) = \# \{ \gamma \in \G : d(z,\gamma z) \leq R \} = \sum_{\gamma\in \Gamma} k_R(\delta(z,\gamma z)).
\end{equation*}
When $R\to\infty$, from \eqref{1802:eq001} we get that
\begin{equation}\label{main and error in R}
N(R,z)\sim c_\Gamma e^{2R}, \quad c_\Gamma=\frac{3\pi}{2\mathcal{C}} .
\end{equation}
Let $0<\eta<1,\,R\gg 1$ and let $k_{\pm}$ be the functions
defined in \eqref{gckernelpm}.
Recalling that they approximate $k_R$ from above and below (see~\eqref{giacomoinequality}), if we define
\begin{equation}\label{gcmodifiedautokernel}
    N^{\pm}(R,z):=\sum_{\gamma\in\G}k_{\pm}(\delta(z,\gamma z)),
\end{equation}
then  we have the inequality
\[
N^{-}(R,z)\leq N(R,z)\leq N^{+}(R,z).
\]
Subtracting the main term given in \eqref{main and error in R}
and integrating against $f$ we deduce, for any $\eta\in(0,1)$, the inequality
\begin{equation*}
     \biggl|
     \int_{\GmodHthree} (N(R,z)-c_\Gamma e^{2R})f(z)\,d\mu(z)
     \biggr|
     \leq \max_{\pm}
     \biggl|
     \int_{\GmodHthree} (N^{\pm}(R,z)-c_\Gamma e^{2R})f(z)\,d\mu(z)
     \biggr|.
\end{equation*}
Therefore, in order to prove Theorem \ref{ourmaintheorem}, it suffices to show the following.
\begin{proposition}\label{smoothedkernel}
    Let $\epsilon>0$, $R\gg1$, and for any $\eta\in(0,1)$ let $N^{\pm}(R,z)$ be as in \eqref{gcmodifiedautokernel}.
    Let $f$ be a smooth compactly supported function as in Theorem~\ref{ourmaintheorem}.
    Assume Hypothesis \QVlink holds with the exponent $q\in[1,3]$
    and Hypothesis \STXlink holds with the exponents $(7/4+\theta,1/4)$, $\theta\in[0,1/4]$.
    Then, for a suitable choice of $\eta=\eta(R,\theta,q)$, we have
    \begin{equation*}
        \int_{\GmodHthree}N^{\pm}(R,z)f(z)\,d\mu(z)=\frac{\pi}{2}\bar{f}e^{2R}+O_{f,\epsilon,\theta,q}\left(e^{R\left(\frac{6-4\theta}{5-4\theta}+\epsilon\right)}+e^{R\left(\frac{2q}{q+1}+\epsilon\right)}\right)
    \end{equation*}
    as $R\to\infty$.
    The constant $\bar{f}$ is the average value of $f$ as defined in \eqref{def:fbar}.
\end{proposition}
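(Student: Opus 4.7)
The plan is to carry out the sketch from the introduction using the smoothed kernel $k_\pm$ in place of the sharp indicator $k_R$, so that the Selberg transform $h_\pm$ enjoys the decay recorded in Lemma~\ref{gcl1}. The starting point is the pre-trace formula for the automorphic kernel:
\[
N^\pm(R,z) \;=\; \sum_{j\geq 0} h_\pm(r_j)\,|u_j(z)|^2 \;+\; \frac{1}{4\pi} \int_{-\infty}^{\infty} h_\pm(r)\,|E(z,1+ir)|^2\,dr,
\]
where $E(z,s)$ is the Eisenstein series for the single cusp of the Picard manifold; absolute convergence follows from Lemma~\ref{gcl1}. Integrating against $f$, the $j=0$ contribution (with $u_0\equiv 1/\sqrt{\mathrm{vol}(\GmodHthree)}$) equals $\bar{f}\,h_\pm(i)$. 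Using the identity $h_\eta(\pm i)=\mu(B_\eta)$, immediate from \eqref{specialvalues} and the volume formula for $B_\eta$, this simplifies to $\bar{f}\,h_{R\pm\eta}(i)$, and a Taylor expansion in $\eta$ produces $(\pi/2)\bar{f}e^{2R}+O(\eta e^{2R})$: the claimed main term plus a smoothing error.

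For the remaining discrete spectrum I integrate against $f$ and perform the key split
\[
\sum_{j\geq 1} h_\pm(r_j)\int_{\GmodHthree} f\,d\mu_j \;=\; \bar{f} \sum_{j\geq 1} h_\pm(r_j) \;+\; \sum_{j\geq 1} h_\pm(r_j)\,A_j, \qquad A_j:=\int_{\GmodHthree} f\,d\mu_j - \bar{f}.
\]
The variance piece is controlled by Cauchy--Schwarz in the weighted form
\[
\Bigl|\sum_{j\geq 1} h_\pm(r_j)\, A_j\Bigr|^2 \;\leq\; \Bigl(\sum_{j\geq 1} |h_\pm(r_j)|\Bigr) \Bigl(\sum_{j\geq 1} |h_\pm(r_j)|\,|A_j|^2\Bigr).
\]
Lemma~\ref{gcl1} together with the Weyl law yields $\sum_{j} |h_\pm(r_j)|\ll e^R\eta^{-1}$; for the second factor I use partial summation with Hypothesis \QVlink to obtain $\sum_{j} |h_\pm(r_j)|\,|A_j|^2\ll e^R\eta^{2-q+\epsilon}$, where the decay of $|h_\pm|$ automatically truncates the effective range at $r_j\sim 1/\eta$. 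Taking a square root of the product gives an $O(e^R \eta^{-(q-1)/2-\epsilon})$ bound for this piece.

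For the STX piece $\bar{f}\sum_{j\geq 1} h_\pm(r_j)$ I insert the oscillatory decomposition $h_\pm(r)=A(R,r,\eta)e^{ir(R\pm\eta)}+B(R,r,\eta)e^{-ir(R\pm\eta)}$ from \eqref{0202:eq001} and apply partial summation against $S(t,e^{R\pm\eta})$. Interpolating Hypothesis \STXlink with the exponents $(7/4+\theta,1/4)$ against the trivial bound $(3,0)$ yields $S(T,X)\ll T^{2+\epsilon}X^{1/(5-4\theta)+\epsilon}+T^2$; feeding this estimate together with the derivative bound $\partial_r A\ll e^R r^{-3}\min(1,(\eta r)^{-1})$ from \eqref{0901:eq010} into the partial-summation integral produces an $\eta$-independent STX-side bound of $O(e^{R(6-4\theta)/(5-4\theta)+\epsilon})$. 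The continuous-spectrum integral is handled by the same scheme, using standard bounds of Rankin--Selberg type for $\int f |E(\cdot,1+ir)|^2\,d\mu$; its contribution is dominated by the discrete-spectrum error.

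Collecting the pieces,
\[
\int_{\GmodHthree} N^\pm(R,z) f(z)\,d\mu(z) \;=\; \tfrac{\pi}{2}\bar{f}\, e^{2R} + O(\eta e^{2R}) + O\bigl(e^R\eta^{-(q-1)/2-\epsilon}\bigr) + O\bigl(e^{R(6-4\theta)/(5-4\theta)+\epsilon}\bigr).
\]
Choosing $\eta=e^{-2R/(q+1)}$ balances $O(\eta e^{2R})$ against $O(e^R\eta^{-(q-1)/2-\epsilon})$, both becoming $O(e^{2Rq/(q+1)+\epsilon})$, which yields the claim. The main technical obstacle I foresee is the STX step: one has to carry out partial summation against the amplitude $A(R,r,\eta)$, splitting the integration range at $r\sim 1/\eta$ so that the $\eta$-dependent decay of $\partial_r A$ cancels against the amplification permitted by STX and produces an $\eta$-independent exponent of $e^R$. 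Verifying that the secondary $T^2$ term in STX is absorbed into the same analysis, and that the Eisenstein contribution does not impose stronger hypotheses than QV and STX already do, requires some bookkeeping but is essentially routine.
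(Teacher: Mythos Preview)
Your overall architecture matches the paper's: spectral expansion, splitting off $\bar f\sum_j h_\pm(r_j)$, bounding the variance piece via Hypothesis~\QVlink, bounding the diagonal piece via Hypothesis~\STXlink, and then optimizing in~$\eta$. The STX step is fine; in fact your interpolation $S(T,X)\ll T^{2+\epsilon}X^{1/(5-4\theta)+\epsilon}+T^2$ leads directly to an (essentially) $\eta$-independent bound $e^{R(6-4\theta)/(5-4\theta)+\epsilon}$, which is a tidy variant of the paper's Lemma~\ref{gcl4}. The Eisenstein estimate is handled in the paper exactly as you indicate (Maass--Selberg plus bounds on $\phi'/\phi$) and contributes $O(Re^R)$.

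The genuine gap is in your Cauchy--Schwarz for the variance piece. Your stated bound
\[
\sum_{j\geq 1}|h_\pm(r_j)|\,|A_j|^2 \;\ll\; e^R\,\eta^{\,2-q+\epsilon}
\]
is correct only for $q\ge 2$. For $q<2$ the integral $\int_1^{1/\eta}t^{q-3}\,dt$ is $O(1)$, not $O(\eta^{2-q})$, so the second factor is only $\ll Re^R$, independent of~$\eta$. Combined with $\sum_j|h_\pm(r_j)|\ll Re^R\eta^{-1}$, your weighted Cauchy--Schwarz then yields
\[
\Bigl|\sum_{j\ge1}h_\pm(r_j)A_j\Bigr|\;\ll\; Re^R\,\eta^{-1/2}
\qquad(q<2),
\]
which after any choice of $\eta$ balances against $\eta e^{2R}$ to give $O(e^{4R/3+\epsilon})$---the $q=2$ exponent, not $2q/(q+1)$. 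So the proposition is not recovered for $q\in[1,2)$.

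The remedy is the paper's dyadic Cauchy--Schwarz (Lemma~\ref{gcl3}): on each block $r_j\in(T,2T]$ bound $\sum h_\pm(r_j)A_j$ by $\max|h_\pm|\cdot(\#\{r_j\})^{1/2}(\sum|A_j|^2)^{1/2}\ll Re^R\,T^{(q-1)/2+\epsilon}\min(1,(\eta T)^{-2})$, and then sum geometrically. This keeps the Weyl count $T^{3/2}$ and the QV bound $T^{q/2}$ in the same factor rather than separating them, which is exactly what your global splitting $|h_\pm|^{1/2}\cdot|h_\pm|^{1/2}A_j$ fails to do when $q$ is small.
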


\section{Spectral expansion and quantum variance bound}\label{S4}

The spectral theory of automorphic forms allows us to expand
geometric quantities such as $N^\pm(R,z)$
in terms of the eigenfunctions of the hyperbolic Laplacian.
Let $k:[1,\infty)\to\mathbb{C}$ and consider the automorphic kernel
\begin{equation*}
    K(z,w)=\sum_{\gamma\in\G}k(\delta(z,w)),
\end{equation*}
where the function $k$ is assumed to decay rapidly enough to ensure convergence.
Let $\{u_j\}$ be an orthonormal system of Maass cusp forms for $\G$.
We write the corresponding eigenvalues as $\lambda_j=1+r_j^2$
and $r_j>0$ when $\lambda_j>1$. For the Picard group it is known
that there are no small eigenvalues other than $\lambda_0=0$ \cite[Ch.~7, Proposition~6.2]{elstrodt}.
There is one cusp and therefore one Eisenstein series $E(z,s)$,
which we assume to be normalised so that the critical line is at $\Re(s)=1$ (see \cite[Ch.~3 Eq.~(2.5)]{elstrodt}
for the definition of Eisenstein series).
Then we have the following:
\begin{lemma}\label{gcl2}
    Assume that $k:[1,\infty)\to\mathbb{C}$ is such that its Selberg transform $h$ (defined in \eqref{giacomospectralfunction}) is even, holomorphic in a strip $|\Im(r)|<1+\xi$ for some $\xi>0$ and satisfies $h(r)\ll (1+|r|)^{-3-\xi}$ in the strip. Let $f$ be a smooth, compactly supported function on $\GmodH^3$. Then, we have
    \begin{equation*}
    \begin{split}
        \int_{\GmodH^3}K(z,z)f(z)\,d\mu(z) ={}& \sum_{r_j}h(r_j)\int_{\GmodH^3}f(z)|u_j(z)|^2\,d\mu(z)
        \\
        &+
        \frac{1}{4\pi}\int_{\GmodH^3}\int_{\mathbb{R}}|E(z,1+ir)|^2h(r)f(z)\,dr\,d\mu(z),
    \end{split}
    \end{equation*}
    where the right-hand side converges absolutely.
\end{lemma}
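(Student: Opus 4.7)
The plan is to reduce the statement to the standard Selberg spectral expansion of $K(z,w)$ on $\GmodHthree$ and then apply Fubini to integrate termwise against $f$. Under the hypotheses on $h$---even, holomorphic in the strip $|\Im r|<1+\xi$ (which contains the exceptional spectral parameter $r_0=i$ associated to the constant eigenfunction $u_0$), and satisfying $h(r)\ll (1+|r|)^{-3-\xi}$ in that strip---the pre-trace formula for the Picard group yields the pointwise identity
\[
K(z,w) = \sum_j h(r_j)\,u_j(z)\overline{u_j(w)} + \frac{1}{4\pi}\int_{-\infty}^{\infty} h(r)\,E(z,1+ir)\overline{E(w,1+ir)}\,dr.
\]
Specializing to $w=z$ produces $K(z,z)$ as the sum of contributions from $|u_j(z)|^2$ and $|E(z,1+ir)|^2$. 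Multiplying by $f(z)$ and integrating against $d\mu$ formally yields the claimed formula; the content of the lemma is in justifying the interchange of integration with the spectral sum and with the Eisenstein integral.

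For the cuspidal part, the $L^2$-normalization of the $u_j$ together with the compact support of $f$ gives $|\int f|u_j|^2 d\mu|\leq \|f\|_\infty$ uniformly in $j$. Combined with Weyl's law in three dimensions ($\#\{j: r_j\leq T\}\ll T^3$) and the assumed decay on $h$, a dyadic decomposition in $|r_j|$ yields
\[
\sum_j |h(r_j)|\cdot\Bigl|\int f\,|u_j|^2 d\mu\Bigr|\ll \|f\|_\infty\sum_j (1+|r_j|)^{-3-\xi}<\infty,
\]
so termwise integration on the discrete side is legitimate.

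For the Eisenstein piece, I would split $r$ dyadically into ranges $|r|\sim T$. On each range the decay of $h$ gives $|h(r)|\ll T^{-3-\xi}$, while the local Weyl law in three dimensions (or equivalently the Maass--Selberg control of the Eisenstein density) bounds $\int_K\int_{-T}^T |E(z,1+ir)|^2\,dr\,d\mu(z)\ll T^3$ for any fixed compact set $K\supset\mathrm{supp}(f)$. The contribution of the $T$-th dyadic block is therefore $\ll \|f\|_\infty\cdot T^{-\xi}$, summable over dyadic $T$. With absolute convergence established on both spectral pieces, Fubini applies and the stated identity follows.

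The main obstacle is not in producing the identity itself---it is a direct application of classical spectral theory of automorphic forms on $\GmodHthree$---but in verifying that the assumed quantitative hypotheses on $h$ are precisely enough to justify the interchange. The exponent $3+\xi$ in the decay assumption is exactly calibrated to overcome the $T^3$ growth coming from the three-dimensional Weyl law and from the local density of Eisenstein series on compact sets; any weaker decay would obstruct the absolute convergence needed for the Eisenstein contribution and the proof would break down there.
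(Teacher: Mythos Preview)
Your proposal is correct and follows essentially the same route as the paper: invoke the pointwise spectral expansion of $K(z,z)$ (the pre-trace formula), multiply by $f$, integrate, and justify the interchange via absolute convergence against the three-dimensional Weyl law. The paper's own proof is terse---it simply cites \cite[Ch.~6, Theorem~4.1]{elstrodt} and \cite[Theorem~7.4]{iwaniec} for the expansion under the stated hypotheses on $h$, and places the explicit convergence discussion (via the local Weyl law \eqref{gqlocalweylS4}) in the paragraph immediately following the proof---whereas you spell out the dyadic bookkeeping for both the cuspidal and Eisenstein contributions directly.
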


\begin{proof}
    The spectral expansion of $K(z,z)$ \cite[Ch.6 Theorem 4.1]{elstrodt} gives the identity for a fixed $z$. Multiplying against $f$ and integrating over $\GmodHthree$ gives the lemma. We remark that the statement in \cite[Ch.6 Theorem 4.1]{elstrodt} is limited to test functions $k$ in the Schwartz class, but the requirements can be relaxed to the conditions on $h$ given above (cf.\cite[Theorem 7.4]{iwaniec}). The constant $(4\pi)^{-1}$ in the second line may depend on the normalization of the Eisenstein series (see \cite[pp. 98--100]{elstrodt}). For the Picard group, the value of the constant appears e.g.~in \cite[\S4.4.4]{laaksonen}.
\end{proof}
\par
The assumption $h(r)\ll|r|^{-3-\xi}$ suffices to counterbalance the eigenfunctions $|u_j(z)|^2$, since by the local Weyl law we know that
\begin{equation}\label{gqlocalweylS4}
    \sum_{r_j\leq T}|u_j(z)|^2\ll T^3.
\end{equation}
The implied constant is uniform for $z$ in a compact set, so the same type of bound as in \eqref{gqlocalweylS4} holds when we integrate against a compactly supported function, which shows that the series over the discrete spectrum in Lemma \ref{gcl2} converges absolutely. In fact, \eqref{gqlocalweylS4} implies
that $|u_j(z)|^2$ is $O(1)$ on average, since Weyl's law \cite[Ch.8 Theorem 9.1]{elstrodt} gives the asymptotic
\begin{equation}\label{gqweyllaw}
    \#\{r_j\leq T\}\sim\frac{T^3}{3\pi^4}.
\end{equation}

Performing the integration against $f$ and using Hypothesis \QVlink
will allow us to replace the cusp forms identically by
a constant and obtain a sum featuring the Selberg transform only.
Indeed, the cuspidal contribution in Lemma \ref{gcl2} features the integral of $f$ against the spectral measures
\begin{equation*}
d\mu_j(z):=|u_j(z)|^2d\mu(z).
\end{equation*}
As we mentioned before, the convergence of these measures to the uniform measure $d\mu(z)$,
in the limit $j\to\infty$, is known as the quantum unique ergodicity conjecture.
The assumption in Hypothesis \QVlink provides explicit estimates
for the speed of convergence, on average over the spectral parameters $r_j$.

\begin{lemma}\label{gcl3}
  Let $0<\eta<1,\, R\gg1$, and let $h_\pm$ be as in \eqref{gc spectral}
  and let $f$ be a smooth, compactly supported function on $\GmodHthree$.
  Assume Hypothesis \QVlink holds with the exponent $q\in[1,3]$.
  Then for every $\epsilon\in(0,1)$ we have
  \[
      \sum_{r_j}h_\pm(r_j)\int_{\GmodHthree}f(z)\,d\mu_j(z)
      =
      \bar{f}\, h_{\pm}(i)
      +
      \bar{f}\sum_{r_j>0}h_\pm(r_j)+O_{f,\epsilon,q}\left(Re^{R}\eta^{-\frac{q-1}{2}-\epsilon}\right).
  \]
\end{lemma}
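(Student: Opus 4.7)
The plan is to isolate the trivial eigenvalue contribution, split the remaining cuspidal sum into a ``mean'' part (matching the $\bar{f}\sum h_\pm(r_j)$ term on the right-hand side) and a ``variance'' remainder, and then estimate that remainder via Cauchy--Schwarz on dyadic blocks combined with Hypothesis \QVlink and the decay estimates from Lemma \ref{gcl1}.

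First, I would single out $j=0$. Since $u_0\equiv\mathrm{vol}(\GmodHthree)^{-1/2}$ and $r_0=i$, one has
\[
h_\pm(r_0)\int_{\GmodHthree} f(z)\,d\mu_0(z) = h_\pm(i)\,\bar{f},
\]
which produces the first summand on the right-hand side. For the remaining terms ($r_j>0$), writing $\int f\,d\mu_j = \bar{f} + (\int f\,d\mu_j - \bar{f})$ splits the sum as $\bar{f}\sum_{r_j>0}h_\pm(r_j) + \mathcal{E}$, where
\[
\mathcal{E}:=\sum_{r_j>0}h_\pm(r_j)\biggl(\int_{\GmodHthree}f(z)\,d\mu_j(z)-\bar{f}\biggr).
\]
It then remains to prove $\mathcal{E}\ll_{f,\epsilon,q} Re^R\,\eta^{-(q-1)/2-\epsilon}$.

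I would estimate $\mathcal{E}$ by dyadic decomposition $\mathcal{E}=\sum_{k\geq 0}\mathcal{E}_k$, grouping $r_j\in(2^{k-1},2^k]$ (recall $r_j\geq 1$ for $j\geq 1$, as the Picard group has no exceptional small eigenvalues). Cauchy--Schwarz gives
\[
|\mathcal{E}_k|\leq\biggl(\sum_{2^{k-1}<r_j\leq 2^k}|h_\pm(r_j)|^2\biggr)^{\!\!1/2}\biggl(\sum_{2^{k-1}<r_j\leq 2^k}\Bigl|\int_{\GmodHthree}f\,d\mu_j-\bar{f}\Bigr|^2\biggr)^{\!\!1/2}.
\]
Hypothesis \QVlink bounds the variance factor by $O_{f,\epsilon,q}(2^{k(q+\epsilon)/2})$. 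For the first factor, the bound $h_\pm(r)\ll Re^R(1+|r|)^{-2}(1+\eta|r|)^{-2}$ from Lemma \ref{gcl1} together with the Weyl count $O(2^{3k})$ for $r_j$ in a dyadic range (from \eqref{gqweyllaw}) yields
\[
\biggl(\sum_{2^{k-1}<r_j\leq 2^k}|h_\pm(r_j)|^2\biggr)^{\!\!1/2}\ll\frac{Re^R}{2^{k/2}(1+\eta 2^k)^2}.
\]
Multiplying, $|\mathcal{E}_k|\ll Re^R\,2^{k((q-1)/2+\epsilon/2)}(1+\eta 2^k)^{-2}$.

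Finally, I would sum over $k$ by splitting at the transition scale $2^k\sim 1/\eta$. For $2^k\lesssim 1/\eta$ the denominator factor is $O(1)$ and, since $q\geq 1$, the geometric sum in $k$ is controlled by its largest term, contributing $\ll Re^R\eta^{-(q-1)/2-\epsilon}$. For $2^k\gtrsim 1/\eta$, the factor $(1+\eta 2^k)^{-2}\sim(\eta 2^k)^{-2}$ produces a series of the form $Re^R\eta^{-2}\,2^{k((q-5)/2+\epsilon/2)}$; since $q\leq 3$ this is a decreasing geometric series whose leading term (at the transition $2^k\sim 1/\eta$) is again of order $Re^R\eta^{-(q-1)/2-\epsilon}$. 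The main technical point is uniformity in the boundary cases $q=1$ (where the first sum contributes only a logarithm in $1/\eta$) and $q=3$ (where the second sum converges only marginally); both logarithmic losses are absorbed into the arbitrarily small $\eta^{-\epsilon}$ slack, yielding the claimed bound for $\mathcal{E}$ and hence the lemma.
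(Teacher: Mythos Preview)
Your proposal is correct and follows essentially the same approach as the paper: isolate the $r_0=i$ term, split the remaining sum into the $\bar f$-mean plus a variance remainder, and bound that remainder by a dyadic Cauchy--Schwarz using Hypothesis \QVlink, the Weyl count, and the decay of $h_\pm$ from Lemma~\ref{gcl1}. The paper phrases the first Cauchy--Schwarz factor as $T^{3/2}\max_{T<r\leq 2T}|h_\pm(r)|$ rather than $(\sum|h_\pm(r_j)|^2)^{1/2}$, but since $h_\pm$ is essentially constant on dyadic blocks this is the same estimate, and the ensuing dyadic summation is identical to yours.
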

\begin{proof}
    The contribution from the zero eigenvalue (i.e.~$r_j=i$) appears unaltered on both sides,
    so we can assume $\lambda\neq 0$.
    There are no small eigenvalues for the Picard group and in fact $r_j>1$ \cite[Ch.7 Proposition 6.2]{elstrodt}.
    In a dyadic window $r_j\in(T,2T]$ we can bound, using Cauchy-Schwarz inequality, Weyl's law and Hypothesis \QVlink,
    \[
        \mathcal{D}_T:=\!\!\sum_{T<r_j\leq 2T} \!\! h_\pm(r_j)\biggl(\int_{\GmodHthree}f(z)\,d\mu_j(z)-\bar{f}\biggr)\ll_{f,\epsilon,q} T^{\frac{3+q}{2}+\epsilon}\!\max_{T<r\leq 2T}|h_\pm(r)|.
    \]
    By Lemma \ref{gcl1}, we deduce
    \begin{equation*}
        \mathcal{D}_T\ll_{f,\epsilon,q} Re^RT^{\frac{q-1}{2}+\epsilon}\min(1,(\eta T)^{-2})\ll_{f,\epsilon,q} Re^R T^{-\epsilon}\eta^{-\frac{q-1}{2}-\epsilon}.
    \end{equation*}
    The result follows from a dyadic decomposition of the sum with $T=2^n$, $n\geq 0$.
\end{proof}

Concerning the contribution of
the continuous spectrum, we can show that it is essentially negligible
for the purpose of proving Theorem \ref{ourmaintheorem}.
\begin{lemma}\label{S4:lemma:continuous}
    Let $\eta\in(0,1)$ and $R\gg 1$.
    Let $k_\pm$ be as in \eqref{kernelpm} and let $h_\pm$ be its
    Selberg transform as defined in \eqref{gc spectral}.
    Let $E(z,s)$ denote the Eisenstein series for the Picard group.
    Let $f$ be a smooth compactly supported function on $\GmodH^3$.
    Then
    \begin{equation}\label{gqcontinuous}
        \int_{\GmodH^3}\int_\R|E(z,1+ir)|^2h_\pm(r)f(z)\,dr\,d\mu(z)\ll_f R\,e^{R}.
    \end{equation}
    The implied constant does not depend on $\eta$.
\end{lemma}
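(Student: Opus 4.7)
The plan is to apply Fubini--Tonelli and reduce to bounding, uniformly (or nearly so) in $r$, the inner integral
\[
I(r) := \int_{\GmodH^3} |E(z,1+ir)|^2\, f(z)\, d\mu(z),
\]
and then to exploit the decay of $h_\pm(r)$ provided by Lemma~\ref{gcl1}. Absolute convergence, and hence the validity of Fubini, follows from any crude polynomial bound on $|E(z,1+ir)|^2$ on the compact support of $f$ together with $|h_\pm(r)| \ll Re^R(1+|r|)^{-2}$ from Lemma~\ref{gcl1}.

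To estimate $I(r)$, since $f$ is bounded with compact support $K \subset \GmodH^3$, we have
\[
I(r) \leq \|f\|_\infty \int_K |E(z,1+ir)|^2\, d\mu(z).
\]
The essential ingredient is a sufficiently mild bound on $E(z,s)$ on the line $\Re(s)=1$ over compact sets. For the Picard group this can be extracted from the Fourier--Bessel expansion of $E(z,s)$, whose constant term features the Dedekind zeta function $\zeta_{\Q(i)}(s)$. Combining the classical estimate $\zeta_{\Q(i)}(1+ir)\ll \log(2+|r|)$ with the rapid decay of the $K$-Bessel contributions yields a pointwise estimate of the form $|E(z,1+ir)|^2 \ll_K \log^{C}(2+|r|)$ for some $C>0$. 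Alternatively, a spectral mean-value bound of the shape $\int_{-T}^{T}|E(z,1+ir)|^2\,dr \ll_K T$ is enough: a dyadic decomposition of $\R$ into ranges $|r|\sim T=2^k$ pairs the bound $|h_\pm(r)|\ll Re^R T^{-2}$ with at most $T$ from the mean-value bound, giving $Re^R T^{-1}$ per dyadic piece, which sums geometrically to $O(Re^R)$.

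Combining the pointwise version with Lemma~\ref{gcl1} gives
\[
\int_\R |h_\pm(r)|\, I(r)\, dr \ll_f Re^R \int_\R \frac{\log^{C}(2+|r|)}{(1+|r|)^2}\, dr \ll_f Re^R,
\]
as claimed. Only the $(1+|r|)^{-2}$ factor in the bound on $h_\pm$ is used, which explains the $\eta$-uniformity of the estimate. The main obstacle is securing the estimate on $E(z,1+ir)$ on compact sets: the $(1+|r|)^{-2}$ decay of $h_\pm$ leaves essentially no slack, so a bound polynomial in $|r|$ of degree strictly below $1$ (such as the logarithmic one sketched above, or the linear mean-value bound) is necessary, while a bound of degree $1$ would fail without additional savings.
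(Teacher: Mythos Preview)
Your overall strategy coincides with the paper's: bound the inner integral
$I(r)=\int_K |E(z,1+ir)|^2\,d\mu(z)$ by something of at most logarithmic growth in $|r|$, then use only the $(1+|r|)^{-2}$ decay of $h_\pm$ from Lemma~\ref{gcl1} to integrate in $r$ and obtain $Re^R$ independently of $\eta$. The final paragraph of your proposal is exactly how the paper finishes.

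The gap is in how you justify the bound on $I(r)$. Your route (a), extracting a \emph{pointwise} estimate $|E(z,1+ir)|^2\ll_K\log^C(2+|r|)$ from the Fourier--Bessel expansion, is too optimistic. The $K$-Bessel terms do not decay rapidly uniformly in $r$: for fixed $y$ and $|n|\lesssim |r|/y$ the functions $K_{ir}(2\pi|n|y)$ are oscillatory, and in three dimensions there are $\asymp r^2$ such Gaussian-integer frequencies. Summing them gives a polynomial bound in $|r|$, not a logarithmic one; a pointwise logarithmic bound would be a Lindel\"of-type statement for the Eisenstein series and is not a routine consequence of the expansion. With only a polynomial bound $|E(z,1+ir)|^2\ll_K |r|^\alpha$ for some $\alpha\geq 1$, the $r$-integral against $(1+|r|)^{-2}$ diverges, and invoking the extra $(1+\eta|r|)^{-2}$ factor would destroy the $\eta$-uniformity you need.

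The paper avoids pointwise bounds altogether: since $f$ is compactly supported, $I(r)$ is dominated by $\|E^a(\cdot,1+ir)\|^2$ for a suitable truncation height $a$, and the Maass--Selberg relations give
\[
\|E^a(\cdot,1+ir)\|^2 \ll_a 1 + \Bigl|\frac{\phi'}{\phi}(1+ir)\Bigr| \ll \log^\kappa(4+|r|),
\]
the last step using the explicit scattering determinant $\phi(s)=\pi\zeta_{\Q(i)}(s-1)/((s-1)\zeta_{\Q(i)}(s))$. This yields the logarithmic bound on $I(r)$ directly at the $L^2$ level. Your route (b), the mean-value estimate $\int_{-T}^{T}|E(z,1+ir)|^2\,dr\ll_K T$, is correct and would suffice, but you only assert it; the standard way to prove it is precisely via Maass--Selberg (or equivalently the continuous-spectrum part of the local Weyl law), so in the end you are led back to the paper's argument.
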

\begin{proof}
We follow \cite[Lemma 6.1]{petridisrisager} and start by considering the integral over $z$.
The support of $f$ is contained in a suitable compact part
of the fundamental domain for $\Gamma$, where the cusp has been cut off
at some height $a$ depending on $f$, so that we can bound
\begin{equation}\label{gqtruncatedint}
\begin{split}
\int_{\GmodHthree} f(z)|E(z,1+ir)|^2\,d\mu(z)\ll_f & \int_{y(z)\leq a} |E(z,1+ir)|^2\,d\mu(z)
\\
&\ll_f
\norm{E^{a}(z,1+ir)}^2,
\end{split}
\end{equation}
where $E^{a}(z,1+ir)$ is the truncated Eisenstein series
as defined in \cite[Ch.~6, Equations (3.15)--(3.17)]{elstrodt}.
The Maass--Selberg relations imply that for $\Re(s)\neq 1$,
\[
     \norm{E^{a}(z,s)}^2=C_\infty \left(\frac{a^{s+\overline{s}}-\phi(s)\overline{\phi(s)}a^{-s-\overline{s}}}{s+\overline{s}}+\frac{\overline{\phi(s)}a^{s-\overline{s}}-\phi(s)a^{-s+\overline{s}}}{s-\overline{s}}
     \right),
\]
where $C_\infty$ depends only on the group $\G$ and $\phi(s)$ is the scattering matrix
(see \cite[Ch.~6, Equations~(3.18)--(3.19)]{elstrodt} for the Maass--Selberg relations
and \cite[Ch.~6, Equation (1.4), p.~232]{elstrodt} for the definition of $\phi(s)$).
Taking the limit as $\Re(s)\to 1$ and arguing as in \cite[(7.41)--(7.42')]{selberg0},
one deduces that
\begin{equation}\label{gqtruncatednorm}
    \norm{E^{a}(z,1+ir)}^2=O_a\left(1+\left|\frac{\phi'}{\phi}(1+ir)\right|\right).
\end{equation}
For the Picard group we have 
\begin{equation*}
     \phi(s)=\frac{\pi\zeta_{\Q(i)}(s-1)}{(s-1)\zeta_{\Q(i)}(s)},
\end{equation*}
where $\zeta_{\Q(i)}$ is the Dedekind zeta function of the field $\Q(i)$.
Therefore, for some constant $\kappa>0$,
we have \cite[Lemma 9.2]{elstrodt}
\begin{equation}\label{gqscattering}
     \frac{\phi'}{\phi}(1+ir)=O\left(\log^\kappa (4+|r|)\right).
\end{equation}
Let $\mathcal{I}$ denote the integral in \eqref{gqcontinuous}.
Combining \eqref{gqtruncatedint}, \eqref{gqtruncatednorm} and \eqref{gqscattering},
multiplying by~$h$ and integrating over $r$, we deduce that
\[
\mathcal{I}\ll_f\int_{\R}|h_\pm(r)|\log^{\kappa}(4+|r|)\,dr.
\]
By Lemma \ref{gcl1} we can thus bound
\[
\mathcal{I}\ll_f Re^R \int_{\R} \frac{\log^\kappa(4+|r|)}{(1+|r|)^2}\,dr\ll_f R\,e^R,
\]
as claimed.
\end{proof}

In summary, combining the results of this section we deduce that
so far we proved the following.
\begin{corollary}\label{S4:cor}
Let $N^\pm(R,z)$ be the counting function defined in \eqref{gcmodifiedautokernel}.
Let $h_\pm(r)$ be as in \eqref{gc spectral} and let  $\epsilon,\eta\in(0,1)$, $R\gg 1$.
Let $f$ be a smooth compactly supported function on $\GmodHthree$.
Assume that Hypothesis \QVlink holds with $q\in[1,3]$.
Then, as $R\to\infty$, we have
\[
\int_{\GmodHthree} N^\pm(R,z)f(z)d\mu(z)
=
\frac{\pi}{2} \bar{f} e^{2R} + \bar{f}\sum_{r_j>0}h_\pm(r_j)
+
O_{f,\epsilon,q}\Bigl(e^{2R} \eta + Re^{R}\eta^{-\frac{q-1}{2}-\epsilon}\Bigr).
\]
\end{corollary}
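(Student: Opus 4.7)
The plan is to assemble the three main results of this section. First I apply Lemma \ref{gcl2} to the automorphic kernel built from $k=k_\pm$, so that $K(z,z)=N^\pm(R,z)$. Lemma \ref{gcl1} shows that $h_\pm$ is even, holomorphic in any horizontal strip $|\Im(r)|<L$, and satisfies $h_\pm(r)\ll(1+|r|)^{-4}$ there, which comfortably exceeds the $(1+|r|)^{-3-\xi}$ decay required in Lemma \ref{gcl2}. Hence that lemma gives a decomposition of $\int_{\GmodHthree} N^\pm(R,z)f(z)\,d\mu(z)$ as a discrete cuspidal sum plus a continuous-spectrum integral, and Lemma \ref{S4:lemma:continuous} immediately bounds the latter by $O_f(Re^R)$.

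Next I apply Lemma \ref{gcl3} to the discrete sum. Under Hypothesis \QVlink with exponent $q\in[1,3]$, it isolates the arithmetic main term $\bar{f}\,h_\pm(i)$ coming from the zero eigenvalue $\lambda_0=0$ (spectral parameter $r_0=i$, constant eigenfunction), reproduces the oscillating sum $\bar{f}\sum_{r_j>0}h_\pm(r_j)$, and leaves a remainder of size $O_{f,\epsilon,q}\bigl(Re^R\eta^{-(q-1)/2-\epsilon}\bigr)$. For $\eta\in(0,1)$ and $q\geq 1$ the continuous-spectrum contribution $O_f(Re^R)$ is dominated by this remainder and can be absorbed.

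It remains to extract the leading behaviour of $\bar{f}\,h_\pm(i)$. By the definition \eqref{gc spectral} and the special values in \eqref{specialvalues}, one observes the exact cancellation $h_\eta(i)=\mu(B_\eta)$, so that
\[
h_\pm(i) = \pi\bigl(\sinh(2(R\pm\eta))-2(R\pm\eta)\bigr).
\]
Expanding $\sinh(2(R\pm\eta))=\tfrac{1}{2}e^{2R}e^{\pm 2\eta}+O(e^{-2R})$ and using $e^{\pm 2\eta}=1+O(\eta)$ for $\eta\in(0,1)$ gives $h_\pm(i)=\tfrac{\pi}{2}e^{2R}+O(e^{2R}\eta)+O(R)$. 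Multiplying by $\bar{f}$ and collecting all contributions produces the claimed identity.

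I do not expect any serious obstacle here: the three lemmas have been tailored precisely for this assembly, and the only genuine calculation is the expansion of $h_\pm(i)$, which becomes routine once the cancellation $h_\eta(i)=\mu(B_\eta)$ between the spectral transform at $\eta$ and the normalising volume is noticed.
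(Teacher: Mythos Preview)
Your proof is correct and follows essentially the same route as the paper: apply the spectral expansion (Lemma~\ref{gcl2}, justified via Lemma~\ref{gcl1}), bound the continuous spectrum by Lemma~\ref{S4:lemma:continuous}, handle the discrete part via Lemma~\ref{gcl3}, and finally expand $h_\pm(i)=h_{R\pm\eta}(i)$ using the cancellation $h_\eta(i)=\mu(B_\eta)$. The only difference is that you spell out a few steps (the invocation of Lemma~\ref{gcl2} and the absorption of the $O_f(Re^R)$ continuous-spectrum term into the quantum-variance error) that the paper leaves implicit.
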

\begin{proof}
Lemma \ref{gcl3} and Lemma \ref{S4:lemma:continuous} give the infinite
sum over $r_j>0$ and the last term in the error.
The leading term in Lemma \ref{gcl3} reads
\[
\bar{f}\, h_{\pm}(i)
\]
By definition of $h_\pm$ and by the evaluation at the point $i$
(see \eqref{gc spectral} and \eqref{specialvalues}), we get
\[
h_{\pm}(i) = h_{R\pm\eta}(i) = \pi(\sinh(2R\pm2\eta)-2(R\pm\eta)) = \frac{\pi}{2}e^{2R} + O(e^{2R}\eta + R),
\]
which gives the statement.
\end{proof}

\section{Proof of  Theorem \ref{ourmaintheorem}}\label{S5}

In the previous section we showed that we can expand the automorphic kernel
$N^\pm(R,z)$ in terms of the eigenvalues and the eigenfunctions,
and then showed how to remove the dependence on the eigenfunctions
by integrating against $f$ and by appealing to the quantum variance bound, see Corollary \ref{S4:cor}. 
It remains now to estimate a spectral exponential sum over the eigenvalues.

Let $T,X>1$ and consider the function $S(T,X)$ discussed in Section \ref{S1:STX}.
As we explained there, we have the estimate $S(T,X)\ll T^3$ uniformly in $X>1$.
Moreover, we assume that Hypothesis \STXlink holds with the exponents $(7/4+\theta,1/4)$,
for some $\theta\in[0,1/4]$ and in Section \ref{S1:STX} and Remark \ref{intro:rmk:variousbounds}
we discuss what values of $\theta$ are allowed.
Combining the two estimates, we obtain that
for $T,X>1$ and all $\epsilon\in(0,1)$ we have
\begin{equation}\label{expsumsestimate}
S(T,X) \ll_{\theta,\epsilon} \min\bigl(T^3,T^{\frac{7+4\theta}{4}+\epsilon}X^{\frac{1}{4}+\epsilon}+T^2\bigr).
\end{equation}
With these estimates at hand, we can bound the sum
involving the function $h_{\pm}$ that appears in Corollary \ref{S4:cor}.

\begin{lemma}\label{gcl4}
Let $\epsilon,\eta\in(0,1)$,
$R\gg 1$ and let $h_\pm$ be as in \eqref{gc spectral}. Then 
\begin{equation}\label{1402:eq010}
\sum_{r_j>0} h_\pm(r_j) \ll_{\theta,\epsilon} e^{R(\frac{6-4\theta}{5-4\theta}+\epsilon)}+e^{R(\frac{5}{4}+\epsilon)}\eta^{\frac{1}{4}-\theta-\epsilon}+e^{R}\eta^{-\epsilon}.
\end{equation}
\end{lemma}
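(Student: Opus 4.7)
The strategy is to exploit the oscillatory decomposition of $h_\pm$ from Lemma~\ref{gcl1} in order to recast the spectral sum as a weighted partial sum of the exponential sum $S(T,X)$, and then to invoke Hypothesis~\STXlink. Since the Picard group has no nonzero small eigenvalues, every $r_j$ in the sum satisfies $r_j>1$, so \eqref{0202:eq001} applies uniformly. Setting $X:=e^{R\pm\eta}\asymp e^R$, the sum rewrites as
\[
\sum_{r_j>0}h_\pm(r_j)=\sum_{r_j>0}A(R,r_j,\eta)X^{ir_j}+\sum_{r_j>0}B(R,r_j,\eta)X^{-ir_j},
\]
and both pieces are treated identically.

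Next, I would split each sum dyadically over $r_j\in(T,2T]$ with $T=2^n$ and apply Abel summation on each dyadic piece. Combining the boundary bound $|A(R,T,\eta)|\ll e^R T^{-2}\min(1,(\eta T)^{-2})$ with the derivative bound $\int_T^{2T}|\partial_r A|\,dr\ll e^R T^{-2}\min(1,(\eta T)^{-1})$ coming from \eqref{0901:eq010}, the contribution of each dyadic piece takes the form
\[
e^R\,T^{-2}\min\!\bigl(1,(\eta T)^{-1}\bigr)\cdot\max_{T\leq t\leq 2T}|S(t,X)|.
\]
For the $S$-factor I would use \eqref{expsumsestimate}, namely the minimum of the trivial bound $t^3$ and the hypothesised bound $t^{(7+4\theta)/4+\epsilon}X^{1/4+\epsilon}+t^2$. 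The two regimes balance at $T_0:=X^{1/(5-4\theta)}=e^{R/(5-4\theta)}$, so I would use the trivial bound for $T\leq T_0$ and the subconvex bound for $T>T_0$; the decay of $A$ introduces an independent transition at $T_1:=\eta^{-1}$.

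A careful dyadic summation in the generic regime $T_0\leq T_1$ then produces the three terms on the right of \eqref{1402:eq010}: the trivial contribution is geometric increasing in $T$ and saturates at $T=T_0$, yielding $e^R T_0=e^{R(6-4\theta)/(5-4\theta)}$; the main subconvex piece, after weighting by the factor $\min(1,(\eta T)^{-1})$, saturates at $T=T_1=\eta^{-1}$ and gives $e^{R(5/4+\epsilon)}\eta^{1/4-\theta-\epsilon}$; and the secondary $t^2$ piece, which is flat in $T$, contributes $e^R$ per dyadic step on $(T_0,T_1]$, for a total of $\ll e^R\log(1/(\eta T_0))\ll e^R\eta^{-\epsilon}$. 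The complementary regime $T_1<T_0$ needs to be checked separately, but there every contribution is majorised by $e^R/\eta$, which under the implicit constraint $\eta\geq e^{-R/(5-4\theta)}$ is itself $\leq e^{R(6-4\theta)/(5-4\theta)}$, so the first displayed term on the right-hand side of \eqref{1402:eq010} absorbs everything. The main obstacle is precisely this bookkeeping: organising the case analysis around the relative order of the two transition scales $T_0$ and $\eta^{-1}$, and verifying that the three stated terms suffice in every configuration.
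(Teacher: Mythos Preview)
Your proposal is correct and follows essentially the same route as the paper: oscillatory decomposition via Lemma~\ref{gcl1}, dyadic partition with Abel summation, and the split between the trivial Weyl bound and the Hypothesis~\STXlink bound at the scale $T_0=X^{1/(5-4\theta)}$. The only cosmetic difference is that the paper does not separate the regimes $T_0\le \eta^{-1}$ and $\eta^{-1}<T_0$ explicitly; it simply bounds $\min(1,(\eta T)^{-1})\le 1$ uniformly for $T\le T_0$, which already yields the first term in~\eqref{1402:eq010}, and then splits on the sign of $\log(\eta T)$ only in the range $T>T_0$---so your extra case $T_1<T_0$ is absorbed automatically.
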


\begin{proof}
By Lemma \ref{gcl1} and the fact that $r_j>1$ by \cite[Ch.~7, Proposition 6.2]{elstrodt},
we can write the sum as
    \begin{equation*}
        \sum_{r_j>1} A(R,r_j,\eta)e^{ir_j(R\pm\eta)} + B(R,r_j,\eta)e^{-ir_j(R\pm\eta)},
    \end{equation*}
    where the coefficients $A$ and $B$ satisfy the bounds in \eqref{0901:eq010}.
    The two parts above are treated similarly, so let us focus only on the sum with $A(R,r_j,\eta)$.
    We do a dyadic decomposition of the sum and look in windows of the type $r_j\in[T,2T)$. 
    Also, let $Y=e^{R\pm\eta}\asymp e^R$. Summation by parts yields
    \begin{equation*}
    \begin{aligned}
    \sum_{T\leq r_j<2T}A(R,r_j,\eta)Y^{ir_j}=A(R,2T,\eta)S(2T,Y)-A(R,T,\eta)S(T,Y)\\
    -\int_{T}^{2T}\partial_u A(R,u,\eta)S(u,Y)\,du.
    \end{aligned}
    \end{equation*}
    If $T\leq Y^{\frac{1}{5-4\theta}}$ we use the bounds in \eqref{0901:eq010},
    where it suffices to bound the minimum by~1,
    and the first bound from \eqref{expsumsestimate} obtaining
    \[
    \sum_{T\leq r_j<2T} A(R,r_j,\eta)e^{ir_j(R\pm\eta)} \ll e^R T \ll e^{R(\frac{6-4\theta}{5-4\theta})}.
    \]
    Since there are at most $O(\log Y)$ intervals with $T\leq Y^{\frac{1}{5-4\theta}}$, bounding each of them
    by the above estimate gives
    \begin{equation}\label{2608:eq001}
    \sum_{1<r_j\leq Y^{\frac{1}{5-4\theta}}} A(R,r_j,\eta)e^{ir_j(R\pm\eta)} \ll e^{R(\frac{6-4\theta}{5-4\theta}+\epsilon)}.
    \end{equation}
    When $T>Y^{\frac{1}{5-4\theta}}$ we use the bounds \eqref{0901:eq010} and the second bound in \eqref{expsumsestimate}, obtaining
    \begin{align*}
        A(R,T,\eta)S(T,Y) &\ll_{\theta,\epsilon} (T^{\theta-1/4+\epsilon}e^{R(5/4+\epsilon)}+e^R)\min(1,(\eta T)^{-2})
        \\
        \int_{T}^{2T}\partial_u A(R,u,n)S(u,Y)\,&du \ll_{\theta,\epsilon} (T^{\theta-1/4+\epsilon}e^{R(5/4+\epsilon)}+e^R)\min(1,(\eta T)^{-1}).
    \end{align*}
    It follows that
    \[
    \sum_{T\leq r_j<2T} A(R,r_j,\eta)e^{ir_j(R\pm\eta)} \ll_{\theta,\epsilon}
    \left(e^{R(5/4+\epsilon)} T^{\theta-1/4+\epsilon} +e^R\right) \min(1,(\eta T)^{-1}).
    \]
    Let $\sigma=(5-4\theta)^{-1}$.
	Summing over dyadic intervals with $T>Y^\sigma$ 
	and splitting cases for $(\eta T)^{-1}\leq1$ and $(\eta T)^{-1}>1$,
	we deduce that
	\begin{equation}\label{2608:eq002}
	\begin{split}
	\sum_{r_j>Y^\sigma} A(R,r_j,\eta)e^{ir_j(R\pm\eta)}
	&\ll_{\theta,\epsilon}
	e^{R(\frac{5}{4}+\epsilon)}
	\bigl(Y^{\sigma(\theta-1/4+\epsilon)}+
	\eta^{1/4-\theta-\epsilon}
	\bigr)
	+e^{R}\eta^{-\epsilon}
	\\
	&\ll_{\theta,\epsilon}
	e^{R(\frac{6-4\theta}{5-4\theta}+\epsilon)}
	+e^{R(\frac{5}{4}+\epsilon)}\eta^{1/4-\theta-\epsilon}
	+e^{R}\eta^{-\epsilon}.
	\end{split}
	\end{equation}
    Combining \eqref{2608:eq001} and \eqref{2608:eq002},
    the lemma follows immediately.
\end{proof}

\begin{proof}[Proof of Proposition \ref{smoothedkernel}]
Let $N^{\pm}(R,z)$ be the automorphic kernel defined in \eqref{gcmodifiedautokernel}. Using Corollary \ref{S4:cor} and Lemma \ref{gcl4} we obtain

\begin{equation*}
\begin{aligned}
&\int_{\GmodHthree} N^\pm(R,z)f(z)d\mu(z)
=
\frac{\pi}{2} \bar{f} e^{2R} \\ 
&+O_{f,\epsilon,\theta,q}\Bigl(e^{2R} \eta + Re^{R}\eta^{-\frac{q-1}{2}-\epsilon}+e^{R(\frac{6-4\theta}{5-4\theta}+\epsilon)}+e^{R(\frac{5}{4}+\epsilon)}\eta^{\frac{1}{4}-\theta-\epsilon}+e^{R}\eta^{-\epsilon}\Bigr).
\end{aligned}
\end{equation*}
Balancing the first and the second term yields the choice $\eta=e^{-\frac{2R}{q+1}}$.
For this choice of $\eta$ the first, second and fifth term on the above error term contribute $e^{(\frac{2q}{q+1}+\epsilon)R}$ and for any $\theta\in[0,1/4]$ and any $q\in[1,3]$ the fourth term is dominated by the third, with the equality given at $\theta=1/4$ independently of $q$. With all the above the proof of  Proposition \ref{smoothedkernel} is now complete.
\end{proof}


\end{document}